\newtheorem{theorem}{Theorem}[section]
\newtheorem{lemma}[theorem]{Lemma}
\newtheorem{pr}{Proposition}[section]
\theoremstyle{definition}
\newtheorem{definition}[theorem]{Definition}
\theoremstyle{remark}
\newtheorem{remark}[theorem]{Remark}
\numberwithin{equation}{section}
\newcommand{\norm}[1]{\Vert #1 \Vert}  
\newcommand{\abs}[1]{\lvert#1\rvert}
\newcommand{\ov}{\overline}
\newcommand{\gr}{\mathrm{grad}}
\newcommand{\ii}{\mathrm{i}}
\newcommand{\Ker}{\mathrm{Ker}\,}
\newcommand{\dif}{\mathrm{d}}
\newcommand{\di}{\mathrm{div}}
\newcommand{\tr}{\mathrm{trace}}
\newcommand{\CC}{\mathbb{C}}
\newcommand{\Cc}{\mathfrak{C}}
\newcommand{\HH}{\mathcal{H}}
\newcommand{\VV}{\mathcal{V}}
\newcommand{\RR}{\mathbb{R}}   
\newcommand{\ZZ}{\mathbb{Z}} 
\newcommand{\NN}{\mathbb{N}}
\newcommand{\Ss}{\mathbb{S}}
\begin{document}

\title{A note on higher-charge configurations for the Faddeev-Hopf model}

\author{Radu Slobodeanu}

\address{Department of Theoretical Physics and Mathematics, University of Bucharest,
P.O. Box Mg-11, RO--077125 Bucharest-M\u agurele, Romania.}

\email{radualexandru.slobodeanu@g.unibuc.ro}

\thanks{The author is grateful to Professor Tudor Ra\c tiu and the Department of Mathematics at \emph{Ecole Polytechnique F\'ed\'erale de Lausanne} for hospitality during the preparation of the present paper. This research was supported by \emph{PN II Idei Grant, CNCSIS, code 1193}.}

\subjclass[2010]{Primary 58E20, 53B50; Secondary 58E30, 81T20}


\keywords{Harmonic map, calculus of variations, critical point, reduction.}

\begin{abstract}
\noindent We identify higher-charge configurations that satisfy Euler-Lagrange equations for the (strong coupling limit of) Faddeev-Hopf model, by means of adequate changes of the domain metric and a reduction technique based on $\alpha$-Hopf construction. In the last case it is proved that the solutions are local minima for the reduced $\sigma_2$-energy and we identify among them those who are global minima for the unreduced energy. 
\end{abstract}

\maketitle

\section{Introduction}
\subsection{Motivation from hadrons physics} 
Skyrme model, stated in the early 60ties \cite{sky}, as well as Faddeev model proposed about ten years later \cite{fad} are attempts to apply the soliton mechanism for particle-like excitations. Explicitly, for the first case the idea was to model baryons as smooth stable finite energy solutions (\textit{solitons}) of a modified nonlinear $\sigma$-model with pion fields, while in the second case, it was suggested that gluon flux tubes in hadrons are modelled by solitons in a similar $\sigma$-model, the main difference being that the former ones were point-like (localized around a point) while the latter are knotted (localized around a loop).

To be more specific, let us present the original version of Faddeev's model, also known as Faddeev-Hopf or Faddeev-Skyrme model. The fields in this model are maps $\overrightarrow{n}$ from $\RR^3$ to the two-sphere $\mathbb{S}^2$, asymptotically constant at infinity. In the static limit the energy of the system is:
$$
\mathcal{E}_{\texttt{Faddeev}} (\overrightarrow{n})=\frac{1}{2} \int_{\mathbb{R}^3} \left\{\norm{d \overrightarrow{n}}^2
+ K \langle d \overrightarrow{n} \times d \overrightarrow{n} \ ,  \overrightarrow{n}\rangle ^2 \right\}\dif^3 x.
$$
where $K$ is a positive coupling constant. The second (fourth power)  term give the possibility of field configurations that are stable under a spatial rescaling. Moreover the following topological lower bound holds: \ 
$\mathcal{E}_{\texttt{Faddeev}}(\overrightarrow{n}) \geq c \cdot \abs{Q(\overrightarrow{n})}^{3/4}$, where $c \neq 0$ is a numerical constant and $Q(\overrightarrow{n}) \in \pi_3(\mathbb{S}^2) \cong \mathbb{Z}$ denotes the \textit{Hopf invariant} ("charge") of $\overrightarrow{n}$ seen as map on $\Ss^3$. The \textit{position} of a field configuration is defined as the preimage of the point $(0, 0, -1)$ (antipodal to the vacuum $\overrightarrow{n}_{\infty}$), so it forms a closed loop.

Let us mention that solutions $\Ss^4 \to \Ss^2$ for the strongly coupled model play also a role, in the quantized version of the theory. For more details on physical models that allow topological solitons, see \cite{mant}. 

\subsection{Differential geometric background} 
The static Hamiltonian of both Skyrme and Faddeev-Hopf models is interpreted as $\sigma_{1,2}$-\textit{energy} of mappings $\varphi: (M,g) \to (N,h)$ between Riemannian manifolds (see \cite{slobi} following \cite{man}):
\begin{equation}
\mathcal{E}_{\sigma_{1,2}}(\varphi) =
\mathcal{E}_{\sigma_{1}}(\varphi) + K \cdot \mathcal{E}_{\sigma_{2}}(\varphi) =
\frac{1}{2} \int_M \left[ \abs{\dif \varphi}^2 + 
K \cdot \abs{\wedge ^2 \dif \varphi}^2\right]  \nu _g .
\end{equation} 
The first term is the standard Dirichlet (quadratic) energy of $\varphi$ and the second (quartic) term is the $\sigma_2$-energy introduced in 1964 by Eells and Sampson \cite{els}. Their critical points are the well-known \textit{harmonic maps} and the less studied $\sigma_2$-\textit{critical} maps, respectively. We shall refer to $\mathcal{E}_{\sigma_{2}}$ as \textit{strongly coupled} energy. The critical points for the \textit{full} energy, or $\sigma_{1,2}$-\textit{critical} maps, are characterized by the equations:
\begin{equation}\label{EL}
\tau(\varphi)+ K \tau_{\sigma_2}(\varphi)=0,
\end{equation}
where 
\begin{itemize}
\item $\tau(\varphi)=\tr \nabla \dif \varphi$ is the tension field of $\varphi$,
\item $\tau_{\sigma_2}(\varphi)=2[e(\varphi)\tau(\varphi) + 
\dif \varphi(\mathrm{grad}e(\varphi))]-
\tr (\nabla \dif \varphi) \circ \Cc_\varphi 
-\dif \varphi(\di \Cc_\varphi)$ is the \textit{$\sigma_2$--tension field} of $\varphi$, cf. \cite{sacs, slobi, cri}, with $\Cc_\varphi= \dif \varphi^t \circ \dif \varphi \in \mathrm{End}(TM)$ denoting the \textit{Cauchy-Green tensor} of $\varphi$.
\end{itemize}
Obvious solutions for \eqref{EL} are those maps that are both harmonic \textit{and} $\sigma_2$-critical. This is the case for the standard Hopf map $(\Ss^3, can) \to (\Ss^2, can)$, the only exact  solution between (round) spheres known until now. But this situation seems very rare and a heuristic reason for this, given in \cite{slobi}, is that while the prototype for harmonic maps (from a Riemann surface to $\CC$) is provided by a holomorphic/conformal map, the prototype of $\sigma_2$-critical maps is an area-preserving map. In this 2-dimensional context, a map encompasses both conditions if and only if it is homothetic. For a further analysis of \textit{transversally} (to the Reeb foliation) area-preserving maps between 3-dimensional contact manifolds and various stability results, see \cite{slobi}.

\subsection{Sketch of the paper} The main idea of the paper is looking for mappings $\Ss^3 \to \Ss^2$ of higher Hopf invariant, which are either harmonic \textit{or} $\sigma_2$-critical and ask if they produce \textit{full} higher-charge solutions by paying the price of a (bi)conformal change of the domain metric.
In section 2 we point out that a harmonic horizontally conformal submersion becomes $\sigma_{2}$ or $\sigma_{1,2}$-critical if we replace the domain metric with a (bi)conformally related one. Some known examples are revisited.
In section 3 we find (non-conformal) $\sigma_2$-critical maps of arbitrary Hopf invariant on $(\Ss^3, can)$ using a general reduction technique known as $\alpha$-\textit{Hopf construction} \cite{rato, ud, bur, ratto} and we study their stability. We mention that the integrability of the strongly coupled Faddeev-Hopf model on $\Ss^3 \times \RR$ (endowed with a Lorentzian metric of warped product type) has already been proved in \cite{fer}.

\section{Horizontally conformal configurations and related metrics}

Let us recall the following 
\begin{definition} (\cite{ud})
A smooth map $\varphi : (M^m , g) \to (N^n ,h)$ between Riemannian manifolds is a {\em horizontally conformal map} if,
at any point $x\in M$, $\dif \varphi_{x}$ maps the {\em horizontal space} $\HH_{x} = (\ker \dif \varphi_{x})^{\perp}$ conformally onto
$T_{\varphi(x)}N$, i.e. $\dif \varphi_{x}$ is surjective and there exists a number $\lambda (x) \neq 0$ such that
$(\varphi^{*}h)_{x}\Big|_{\HH_{x} \times \HH_{x}} = 
\lambda^{2}(x) g_{x}\Big|_{\HH_{x} \times \HH_{x}}$, or equivalently $(\Cc_{\varphi})_{x} |_{\HH_{x}} = 
\lambda^{2}(x) Id_{TM}|_{\HH_{x}}$.
The function $\lambda$ is the {\em dilation} of $\varphi$; if $\lambda \equiv 1$, then $\varphi$ is a \textit{Riemannian submersion}. If a horizontally conformal map is moreover harmonic, then it is a  \emph{harmonic morphism}. The mean curvatures of the distributions $\HH$ and $\VV = \ker \dif \varphi$ are denoted $\mu^{\HH}$ and $\mu^{\VV}$. 
\end{definition}

In this section we look for horizontally conformal $\sigma_{1,2}$-critical mappings between two Riemannian manifolds. We mention that horizontally conformal condition for complex valued maps has been analyzed in physics literature under the name \textbf{eikonal equation} (see \cite{AGW} and references therein).

\begin{remark}[$\sigma_2$-tension field for horizontally conformal maps, \cite{slobi}]\label{doiunu}
If $\varphi$ is horizontally conformal of dilation $\lambda$, then 
\begin{equation*}
\begin{split}
\tau(\varphi)&= -\dif \varphi \left((n-2)\gr \ln \lambda + (m-n)\mu^{\VV}\right), \\
\tau_{\sigma_2}(\varphi)
&= (n-1)\lambda^2 \left[ \tau(\varphi) + 
2\dif \varphi(\mathrm{grad} \ln \lambda)\right]
=\frac{n-1}{n}\tau_{4}(\varphi),
\end{split}
\end{equation*}
where $\tau_{4}(\varphi)$ is the Euler-Lagrange operator for the 4-energy,
$(1/4) \int_M \abs{\dif \varphi}^4 \nu _g$. In particular, a submersive harmonic morphism is $\sigma_{1,2}$-critical if and only if it is horizontally homothetic (with minimal fibres).
\end{remark}

So the harmonicity and $\sigma_2$-criticality of a horizontally conformal map are related as follows.
\begin{lemma}
Let $\varphi : (M^m , g) \to (N^n ,h)$ with $m \neq 2$ be a horizontally conformal map of dilation $\lambda$. Then $\varphi$ is $\sigma_{1,2}$-critical if and only if it is harmonic with respect to the conformally related metric $\widetilde g$ on $M$, given by
\begin{equation}
\widetilde g = 
\left[1 + K (n-1) \lambda^2 \right]^{\frac{2}{m-2}} \cdot g
\end{equation}
In particular, $\varphi$ is $\sigma_{2}$-critical if and only if it is harmonic with respect to the conformally related metric 
$\widetilde g = \lambda^{\frac{4}{m-2}} \cdot g$.
\end{lemma}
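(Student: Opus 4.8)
The plan is to combine the conformal transformation law for the tension field with the expression for $\tau_{\sigma_2}(\varphi)$ recorded in Remark \ref{doiunu}. Recall that under a conformal change $\widetilde g = e^{2\rho} g$ of the domain metric on $M^m$, the tension field of $\varphi$ transforms according to
\[
\tau_{\widetilde g}(\varphi) = e^{-2\rho}\left[\tau_g(\varphi) + (m-2)\,\dif\varphi(\gr_g \rho)\right],
\]
and this is exactly where the hypothesis $m \neq 2$ is used: for $m=2$ the domain Dirichlet energy is conformally invariant and the statement degenerates. Observe also that only the $\HH$-component of $\gr_g\rho$ contributes, since $\dif\varphi$ annihilates $\VV = \ker\dif\varphi$, and that $\varphi$ stays horizontally conformal for $\widetilde g$ because $\ker\dif\varphi$ and the orthogonality of $TM = \HH\oplus\VV$ are conformally invariant.

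First I would rewrite the Euler--Lagrange equation \eqref{EL} with the help of Remark \ref{doiunu}: since $\tau_{\sigma_2}(\varphi) = (n-1)\lambda^2[\tau(\varphi) + 2\dif\varphi(\gr\ln\lambda)]$, the condition $\tau(\varphi) + K\tau_{\sigma_2}(\varphi) = 0$ becomes
\[
\bigl[1 + K(n-1)\lambda^2\bigr]\,\tau(\varphi) + 2K(n-1)\lambda^2\,\dif\varphi(\gr\ln\lambda) = 0 .
\]
As $K>0$ the bracket is everywhere $\geq 1$, so dividing by it is legitimate. The key computation is then the identity
\[
\frac{2K(n-1)\lambda^2}{1+K(n-1)\lambda^2}\,\dif\ln\lambda = \dif\ln\bigl(1 + K(n-1)\lambda^2\bigr),
\]
obtained by the substitution $u = K(n-1)\lambda^2$, for which $\dif u = 2u\,\dif\ln\lambda$, so that the left-hand side equals $\dif u/(1+u)$.

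Comparing with the transformed tension field, I would set $\rho = \frac{1}{m-2}\ln\bigl(1 + K(n-1)\lambda^2\bigr)$, that is $e^{2\rho} = \bigl[1+K(n-1)\lambda^2\bigr]^{2/(m-2)}$; this $\rho$ is smooth because the argument of the logarithm is bounded below by $1$. With this choice the equation $\tau_{\widetilde g}(\varphi)=0$ is, up to the nonvanishing factor $e^{-2\rho}$, precisely the rewritten form of \eqref{EL}, and since each step is an equivalence the two conditions coincide, which proves the first claim. For the second claim I would argue in the same way, using that $\sigma_2$-criticality means $\tau_{\sigma_2}(\varphi)=0$, equivalently (the dilation $\lambda$ being nonvanishing and $n\geq 2$) $\tau(\varphi) + 2\dif\varphi(\gr\ln\lambda)=0$; then $\rho = \frac{2}{m-2}\ln\lambda$ does the job and yields $\widetilde g = \lambda^{4/(m-2)}\, g$. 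Alternatively, this is the formal $K\to\infty$ limit of the first metric after an overall rescaling.

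The only genuinely delicate point is the algebraic identity in the middle step, which makes the conformal factor integrate to an explicit closed form; everything else reduces to the standard conformal transformation law for the tension field together with Remark \ref{doiunu}. A routine verification is that $\dif\varphi(\gr_g\rho)$ is unchanged if one uses the $\widetilde g$-gradient instead, since the two gradients differ only by a positive conformal factor which is absorbed into the overall $e^{-2\rho}$.
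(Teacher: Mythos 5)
Your proof is correct and follows essentially the same route as the paper: both rewrite $\tau_{\sigma_{1,2}}(\varphi)$ via Remark \ref{doiunu} as a nonvanishing multiple of $\tau (\varphi) + \dif \varphi (\gr \ln [1 + K(n-1)\lambda^2])$ and match this against the conformal transformation law for the tension field, your ``key identity'' being exactly the computation left implicit in the paper's second display. The only difference is that you spell out the details the paper omits.
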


\begin{proof}
Under an arbitrary conformal change of metric $\tilde g = a^2 \cdot g$, the tension field of a map becomes:
$$
\widetilde \tau (\varphi)= \frac{1}{a^2}\left\{ \tau (\varphi) 
+ \dif \varphi (\gr \ln a^{m-2}) \right\}
$$
But, according to the above remark we also have:
$$
\tau_{\sigma_{1,2}} (\varphi)= \left[1 + K (n-1) \lambda^2 \right] \left\{ \tau (\varphi) 
+ \dif \varphi (\gr \ln \left[1 + K (n-1) \lambda^2 \right]) \right\}
$$
\end{proof}

Now let us recall another important type of related metrics.
\begin{definition} (\cite{ud})
Let $(M^m, g)$ be a Riemannian manifold endowed with a distribution $\VV$ of codimension $n$. Denote $\HH = \VV^{\perp}$. Two metrics are \textit{biconformally related with respect to} $\VV$ if it exists a smooth function $\rho: M \to (0, \infty)$ such that: 
\begin{equation}\label{change}
g_{\rho} = \rho^{-2}g^{\mathcal{H}} + \rho^{\frac{2n-4}{m-n}}g^{\mathcal{V}}.
\end{equation}
\end{definition}
The harmonicity of almost submersive maps is invariant under biconformal changes of metric \eqref{change} with respect to $\VV = \Ker \dif \varphi$, cf. \cite{slub, pan}. In particular, for any submersive harmonic morphism $\varphi : (M^m , g) \to (N^n ,h)$ with dilation $\lambda$ and $m > n$, if we take on $M$ the biconformally related metric $g_{\frac{1}{\lambda}}$, then it becomes a Riemannian submersion with minimal fibres (and in particular, $\sigma_2$-critical).

Therefore we got two ways to obtain $\sigma_{1,2}$-critical maps from harmonic morphisms, that we now resume in the following

\begin{pr}\label{key}
Let $\varphi: (M^m,g) \to (N^n,h)$ be a submersive harmonic morphism with $m > n$ and dilation $\lambda$. Then:

\medskip
\noindent $(i.)$ $\varphi$ is $\sigma_{1,2}$-critical with respect to the biconformally related metric $g_{\frac{1}{\lambda}}$ on $M$;

\medskip
\noindent $(ii.)$ $\varphi$ is $\sigma_{1,2}$-critical with respect to the conformally related metric $\widetilde g = b^2 \cdot g$ on $M$ if and only if
\begin{equation}
\gr^{\HH} \left[b^{m-4}(b^2 + K (n-1)\lambda^2)\right] =0.
\end{equation}
In particular, if $m \neq 4$, then $\varphi$ is $\sigma_{2}$-critical with respect to the conformally related metric $\widetilde g = \lambda^{\frac{4}{4-m}} \cdot g$.
\end{pr}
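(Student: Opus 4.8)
The plan is to reduce both parts to the preceding Lemma and to Remark~\ref{doiunu}. For $(i.)$, recall (invariance of harmonicity of almost submersive maps under biconformal changes of metric, \cite{slub,pan}, as noted above) that the passage from $g$ to $g_{1/\lambda}$ turns the harmonic morphism $\varphi$ into a Riemannian submersion with minimal fibres. With respect to $g_{1/\lambda}$ the dilation of $\varphi$ is then $\equiv 1$ and $\mu^{\VV}=0$; plugging this into the two identities of Remark~\ref{doiunu} gives $\tau(\varphi)=0$ and $\tau_{\sigma_2}(\varphi)=0$, whence $\tau(\varphi)+K\,\tau_{\sigma_2}(\varphi)=0$, i.e. $\varphi$ is $\sigma_{1,2}$-critical for $g_{1/\lambda}$. (Equivalently, a Riemannian submersion with minimal fibres is horizontally homothetic with minimal fibres, so one may simply quote the last sentence of Remark~\ref{doiunu}.)

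For $(ii.)$ the first observation is that a conformal change $\widetilde g=b^2g$ alters neither $\Ker\dif\varphi$ nor orthogonality, so $\varphi$ remains a horizontally conformal submersion for $\widetilde g$, now of dilation $\widetilde\lambda=\lambda/b$. Applying the preceding Lemma to $(M,\widetilde g)\to(N,h)$ (for $m\neq2$), $\varphi$ is $\sigma_{1,2}$-critical for $\widetilde g$ iff it is harmonic for $\bigl[1+K(n-1)\widetilde\lambda^2\bigr]^{2/(m-2)}\widetilde g$, which is a conformal multiple $c^2g$ of the original metric with
\[
c^{m-2}=b^{m-2}\Bigl[1+K(n-1)\tfrac{\lambda^2}{b^2}\Bigr]=b^{m-4}\bigl(b^2+K(n-1)\lambda^2\bigr).
\]
Now $\varphi$ is a harmonic morphism, so $\tau(\varphi)=0$ for $g$, and by the conformal-change formula for the tension field recalled in the proof of the Lemma, for $c^2g$ one gets $\widetilde\tau(\varphi)=c^{-2}\dif\varphi\bigl(\gr\ln c^{m-2}\bigr)$; since $\varphi$ is submersive this vanishes exactly when $\gr\ln c^{m-2}$ is vertical, i.e. $\gr^{\HH}\ln c^{m-2}=0$, equivalently $\gr^{\HH}c^{m-2}=0$ (as $c>0$) --- which is the asserted equation. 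The last statement comes out of the same computation with the $\sigma_2$-part of the Lemma: $\varphi$ is $\sigma_2$-critical for $\widetilde g$ iff harmonic for $\widetilde\lambda^{4/(m-2)}\widetilde g=c^2g$, now with $c^{m-2}=\lambda^2b^{m-4}$, so the choice $b^{m-4}=\lambda^{-2}$, i.e. $b^2=\lambda^{4/(4-m)}$ (available precisely because $m\neq4$), makes $c^{m-2}$ constant and $\gr^{\HH}c^{m-2}=0$ automatic, giving $\widetilde g=\lambda^{4/(4-m)}g$.

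There is no real difficulty beyond bookkeeping: the points to be careful with are checking that each metric change keeps $\varphi$ a horizontally conformal submersion so the earlier Lemma applies verbatim, tracking the transformation law $\widetilde\lambda=\lambda/b$ and the ensuing algebra for $c^{m-2}$, and --- for $(i.)$ --- making sure the cited biconformal result delivers a Riemannian submersion with \emph{minimal} fibres, not just a harmonic map, since it is minimality of the fibres that lets Remark~\ref{doiunu} annihilate the $\sigma_2$-tension term.
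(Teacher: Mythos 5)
Your proposal is correct and follows exactly the route the paper intends: the paper gives no separate proof of Proposition~\ref{key}, presenting it as a summary of the preceding Lemma (conformal change plus the tension-field transformation law) and of the biconformal-invariance discussion, and your argument reconstructs precisely that, including the correct bookkeeping $\widetilde\lambda=\lambda/b$ and $c^{m-2}=b^{m-4}\bigl(b^2+K(n-1)\lambda^2\bigr)$. The points you flag (minimality of the fibres after the biconformal change, the $m\neq 2$ hypothesis of the Lemma) are exactly the right ones to be careful about.
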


\begin{remark}
\begin{enumerate}
\item[(a)] If $n=2$, then biconformally related metric needed above has a simpler form: $g_{\frac{1}{\lambda}}=\lambda^2 g^{\HH} + g^{\VV}$. 


\item[(b)] Using the $\alpha$-Hopf construction \cite{ratto}, for each pair of positive integers $k$, $\ell$, one can construct a smooth harmonic morphism $\varphi_{k, \ell}: (\Ss^3, e^{2\gamma} \cdot can) \to (\Ss^2, can)$ with Hopf invariant $k\ell$, cf. \cite[Example 13.5.3]{ud} (some details will be also given in the next section). So, by applying Proposition \ref{key}, we can obtain a $\sigma_{1,2}$-critical (or a $\sigma_{2}$-critical) configuration in \textit{every} nontrivial class of $\pi_3(\Ss^2)=\ZZ$ with respect to a metric (bi)conformally related to the canonical one.

\item[(c)] By composing a semiconformal map from $\Ss^4$ to $\Ss^3$ (used in \cite{rato}) with the above mentioned map $\varphi_{k, \ell}$, Burel \cite {bur} has obtained a family of non-constant harmonic morphisms $\Phi_{k, \ell}: (\Ss^4, g_{k, \ell}) \to (\Ss^2, can)$ which represents the (non)trivial class of $\pi_4(\Ss^2)=\ZZ_2$ whenever $k \ell$ is even (respectively odd). In this case too, $g_{k, \ell}$ is in the conformal class of the canonical metric (on $\Ss^4$). 

Again applying Proposition \ref{key}, we can obtain a $\sigma_{1,2}$-critical configuration in the nontrivial class of $\pi_4(\Ss^2)=\ZZ_2$ with respect to a metric (bi)conformally related to the canonical one. Indeed we have only to choose a suitable function $\vartheta$ constant along the horizontal curves and to take $\widetilde g_{k, \ell} = (\vartheta - K \lambda^2) \cdot g_{k, \ell}$.

On the other hand, to obtain a $\sigma_2$-critical point $(\Ss^4, e^{\nu} \cdot can) \to (\Ss^2, can)$ (i.e. an instanton for the strong coupling limit of the Faddeev-Hopf model on Minkowski space) is no more possible with the same procedure, due to conformal invariance in 4 dimensions. A $\sigma_2$-critical map defined on the same pattern as in \cite{bur} may still exist, but it might be not  horizontally conformal.
\end{enumerate}
\end{remark}

\section{Non-conformal higher-charge configurations for the strongly coupled model}  

In \cite{war} Ward has proposed the investigation of the following maps 
\begin{equation}\label{ward}
\Psi_{k, \ell}: \Ss^{3}_{R} \to \CC P^1, \qquad
(z_0, \ z_1) \mapsto 
\left[\frac{z_{0}^{k}}{\abs{z_0}^{k-1}}, \ \frac{z_{1}^{\ell}}{\abs{z_1}^{\ell - 1}} \right], \qquad k, \ell \in \NN^*
\end{equation}
as higher-charge configurations for the Faddeev-Hopf model. He estimated their energy and then compared it to a conjectured topological lower bound.

It is easy to see that $\Psi_{k, \ell}$ are particular cases (via the composition with a version of stereographic projection) of the $\alpha$-\textit{Hopf construction} (applied to $F: \Ss^1 \times \Ss^1 \to \Ss^1$, $F(z, w)=z^k w^{\ell}$) that provides us $\varphi^{\alpha}_{k, \ell}: \Ss^{3}_{R} \to \Ss^2$ defined by:
\begin{equation}\label{aho}
\varphi^{\alpha}_{k, \ell}(R\cos s \cdot e^{\ii x_1}, \ R\sin s \cdot e^{\ii x_2}) = 
\left( \cos \alpha (s), \  \sin \alpha (s) \cdot e^{\ii (kx_1+\ell x_2)}) \right),
\end{equation}
where $k, \ell \in \ZZ^*$ and $\alpha:[0, \pi /2] \to [0, \pi]$ satisfies the boundary conditions $\alpha(0)=0$, $\alpha(\pi /2)=\pi$. When $(k, \ell)=(\mp 1, 1)$ and $\alpha(s) = 2s$, this construction provides us the (conjugate) Hopf fibration. 

The maps $\varphi^{\alpha}_{k, \ell}$ are \textit{equivariant} with respect to some \textit{isoparametric} functions (projections in the argument $s$) and their Hopf invariant is $Q(\varphi^{\alpha}_{k, \ell})=k\ell$ (for more details see \cite{ratto}). They have been considered in many places as the \textit{toroidal ansatz}, see e.g. \cite{AGW, fer, glad, hiet, meis}. 

\medskip 

Let us work out explicitly the condition of being $\sigma_2$-critical for $\varphi^{\alpha}_{k, \ell}$. 

Consider the open subset of the sphere $\Ss^{3}_{R}$ parametrized by
$$
\{p=\left(\cos s \cdot e^{\ii x_1}, \ \sin s \cdot e^{\ii x_2} \right) \ \vert \ (x_1, x_2, s) \in (0, 2\pi)^2 \times (0, \pi / 2) \}.
$$
The (standard) Riemannian metric of $\Ss^{3}_{R}$ is
$g_p=R^2 \left(\cos^{2}s \, \dif x_1^2 + \sin^{2}s \, \dif x_2^2 + \dif s^2 \right)$.

We can immediately construct the orthonormal base for $T_{p}\Ss^{3}_{R}$:
$$
f_1=\frac{1}{R \cos s}\frac{\partial}{\partial x_1}; \quad
f_2=\frac{1}{R \sin s}\frac{\partial}{\partial x_2}; \quad
f_3=\frac{1}{R}\frac{\partial}{\partial s}
$$
Analogously, if we consider 
$\{x=\left(\cos t , \ \sin t \cdot e^{\ii u})\right) \  \vert \  (t, u) \in  (0, \pi) \times (0, 2\pi) \}$,
an open subset of $\Ss^{2}$, then the standard round metric reads \ $h = \dif t^2 + \sin^{2}t \, \dif u^2$.

The differential of the map $\varphi= \varphi^{\alpha}_{k, \ell}$ operates as follows:
\begin{equation}\label{dif}
\dif \varphi (f_1)=\dfrac{k}{R \cos s}\dfrac{\partial}{\partial u}; \quad 
\dif \varphi (f_2) = \dfrac{\ell}{R \sin s}\dfrac{\partial}{\partial u}; \quad
\dif \varphi (f_3) = \dfrac{\alpha^{\prime}(s)}{R}\dfrac{\partial}{\partial t}.
\end{equation}

As we can easily check, the vertical space $\VV = \Ker \dif \varphi$ is spanned by the unitary vector
$$
E_3= \frac{k\ell}{\sqrt{k^2 \sin^2 s + \ell^2 \cos^2 s}}
\left(\frac{\cos s}{k} f_1 - \frac{\sin s}{\ell} f_2 \right)
$$
and the horizontal distribution $\HH = (\Ker \dif \varphi)^{\perp}$, by the unitary vectors
$$
E_1 = f_3, \qquad E_2 = \frac{k\ell}{\sqrt{k^2 \sin^2 s + \ell^2 \cos^2 s}}
\left(\frac{\sin s}{\ell} f_1 + \frac{\cos s}{k} f_2 \right).
$$

A key observation in what follows is that 
\begin{itemize}
\item $E_1$ is an eigenvector for $\varphi^* h$, corresponding to the eigenvalue 
$$
\lambda_{1}^{2}=
\left[\frac{\alpha^{\prime}(s)}{R}\right]^2 ;
$$

\item $E_2$ is an eigenvector for $\varphi^* h$, corresponding to the eigenvalue 
$$
\lambda_{2}^{2}=
\frac{\sin^2 \alpha(s)}{R^2} \cdot \frac{k^2 \sin^2 s + \ell^2 \cos^2 s}{\sin^2 s \cos^2 s}.
$$
Obviously, $E_3$ is an eigenvector too,
corresponding to the zero eigenvalue.
\end{itemize}

\begin{remark}
\begin{enumerate}
\item[(a)] (\textit{Horizontally conformal maps}.) Clearly $\varphi: (\Ss^{3}, can) \to (\Ss^2, can)$ is horizontally conformal provided that $\lambda_{1}^{2} = \lambda_{2}^{2}$. This is a (first order) ODE in $\alpha$:
\begin{equation}
\alpha^{\prime} = \pm \sin \alpha \sqrt{\dfrac{k^2}{\cos^2 s} + \dfrac{\ell^2}{\sin^2 s}}.
\end{equation}
It has a solution for all $k$ and $\ell$, explicitly given in \cite[Example 13.5.3]{ud}. Then taking $\alpha_0$ a solution and performing an appropriate conformal change of metric, we obtain a harmonic morphism $\varphi_{k, \ell}^{\alpha_0}: (\Ss^{3}, e^{2\gamma}can) \to (\Ss^2, can)$.

\item[(b)] (\textit{Harmonic maps}.) For a submersion $\varphi: (M^3,g)\to (N^2,h)$, the equations of harmonicity can be translated in terms of eigenvalues of $\varphi^* h$ as follows, cf. \cite{brd, slub}
\begin{equation}\label{hal}
\left\{
\begin{array}{ccc}
\frac{1}{2}E_1 (\lambda_{2}^{2} - \lambda_{1}^{2}) &=&
(\lambda_{2}^{2} - \lambda_{1}^{2})g(\nabla_{E_2}E_2, E_1) -
\lambda_{1}^{2}g(\mu^{\VV}, E_1)\\[3mm]
\frac{1}{2}E_2 (\lambda_{1}^{2} - \lambda_{2}^{2}) &=&
(\lambda_{1}^{2} - \lambda_{2}^{2})g(\nabla_{E_1}E_1, E_2) -
\lambda_{2}^{2}g(\mu^{\VV}, E_2)
\end{array}
\right.
\end{equation}

For our map given by \eqref{aho}, the second equation is satisfied trivially and the first one leads to a second order ODE in $\alpha$, cf. also \cite{ratto}:
\begin{equation}\label{odeha}
\alpha^{\prime \prime} + (\cot s - \tan s) \alpha^{\prime} - 
\left(\frac{k^2}{\cos^2 s} + \frac{\ell^2}{\sin^2 s}\right)\sin \alpha \cos \alpha = 0.
\end{equation}
According to \cite[Theorem (3.13)]{ratto} and \cite{smi}, \eqref{odeha} has a solution if and only if $\ell = \pm k$; moreover, it is explicitely given by $\alpha(s)=2\arctan\left(C \tan^{k}s\right)$, $C>0$. The corresponding harmonic map $\varphi^{\alpha}_{k, \ell}$ is the standard Hopf map followed by a weakly conformal map of degree $k$.
\end{enumerate}
\end{remark}

We will apply a strategy analogous to the harmonic case described above. Firstly, we need a general result, whose proof is to be find in \cite{slobi}:

\begin{lemma} \label{le}
Let $\varphi: (M^m,g)\to (N^2,h)$ be a submersion. Then $\varphi$ is $\sigma_2$-critical if and only if the following equation is satisfied:
\begin{equation}\label{fh}
\gr^{\HH}(\ln \lambda_{1}\lambda_{2}) - (m-2)\mu^{\VV}=0.
\end{equation}
Moreover, $\varphi$ remains $\sigma_2$-critical when we replace $g$ with $\ov{g}=\sigma^{-2}g^{\HH}+\rho^{-2}g^{\VV}$
if and only if 
$\gr^{\HH}(\sigma^2 \rho^{2-m})=0$, where $\sigma$ and $\rho$ are functions on $M$.
\end{lemma}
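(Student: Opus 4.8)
The plan is to exploit that, since $\dim N=2$, the quartic integrand is governed by a single $2$-form. Choosing locally an area form $\omega_N$ on $N$, the pull-back $\varphi^{*}\omega_N$ is a \emph{closed} $2$-form on $M$ which, in the $\varphi^{*}h$-eigenframe $\{E_1,E_2\}$ of $\HH$ completed to an adapted orthonormal frame $\{E_1,E_2,E_\alpha\}_{\alpha\geq3}$ of $TM$ (the $E_\alpha$ spanning $\VV$), reads $\varphi^{*}\omega_N=\pm\,\lambda_1\lambda_2\,E_1^{\flat}\wedge E_2^{\flat}$, because $\dif\varphi(E_1)$ and $\dif\varphi(E_2)$ are orthogonal of norms $\lambda_1,\lambda_2$. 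Hence $\abs{\wedge^{2}\dif\varphi}^{2}=\lambda_1^{2}\lambda_2^{2}=\abs{\varphi^{*}\omega_N}^{2}$ and $\mathcal{E}_{\sigma_2}(\varphi)=\tfrac12\int_M\abs{\varphi^{*}\omega_N}^{2}\,\nu_g$, so the problem reduces to the variational analysis of this $L^{2}$-norm.

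First I would compute the first variation. For a variation $\varphi_t$ with variation field $V$, Cartan's formula applied to $\Phi\colon(-\varepsilon,\varepsilon)\times M\to N$ together with $\dif\omega_N=0$ gives $\tfrac{\dif}{\dif t}\varphi_t^{*}\omega_N\big|_{0}=\dif\eta$, where $\eta$ is the $1$-form $\eta(W)=\omega_N\big(V,\dif\varphi(W)\big)$. This $\eta$ is horizontal, and because $\dif\varphi|_{\HH}$ is an isomorphism onto $TN$ and $\omega_N$ is nondegenerate, every (compactly supported) horizontal $1$-form on $M$ arises as such an $\eta$ for a suitable $V$. Integrating by parts, $\tfrac{\dif}{\dif t}\mathcal{E}_{\sigma_2}(\varphi_t)\big|_{0}=\int_M\langle\varphi^{*}\omega_N,\dif\eta\rangle\,\nu_g=\int_M\langle\delta(\varphi^{*}\omega_N),\eta\rangle\,\nu_g$, whence $\varphi$ is $\sigma_2$-critical if and only if the horizontal component of $\delta(\varphi^{*}\omega_N)$ vanishes.

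Next I would evaluate $\delta(\varphi^{*}\omega_N)=\delta\big(\lambda_1\lambda_2\,\omega\big)$, with $\omega:=E_1^{\flat}\wedge E_2^{\flat}$, using $\delta(f\omega)=f\,\delta\omega-\iota_{\gr^{\HH}\! f}\,\omega$ and the structure equations of the adapted frame. Writing $\delta\omega=-\sum_a\iota_{E_a}\nabla_{E_a}\omega$: the contractions along the \emph{vertical} directions produce $(m-2)\,\iota_{\mu^{\VV}}\omega$, i.e.\ the trace of the second fundamental form of the fibres, while the contractions along the \emph{horizontal} directions produce only $\big([E_1,E_2]^{\VV}\big)^{\flat}$, a vertical $1$-form carrying the non-integrability of $\HH$; hence $[\delta\omega]^{\HH}=(m-2)\,\iota_{\mu^{\VV}}\omega$. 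Collecting everything, $[\delta(\varphi^{*}\omega_N)]^{\HH}=-\lambda_1\lambda_2\,\iota_{\,\gr^{\HH}(\ln\lambda_1\lambda_2)-(m-2)\mu^{\VV}}\,\omega$, and since $X\mapsto\iota_X\omega$ is injective on $\HH$ and $\lambda_1\lambda_2\neq0$, this vanishes exactly when \eqref{fh} holds. I expect this last bookkeeping to be the main obstacle: one must check carefully that all antisymmetric (integrability) contributions land in the vertical component and so disappear from the Euler--Lagrange equation, and that the dimensional factor is precisely $m-2$. (Equivalently one could insert the spectral decomposition of $\Cc_\varphi$ directly into the expression for $\tau_{\sigma_2}(\varphi)$ together with the O'Neill-type formulas for $\nabla\dif\varphi$, but the route above makes transparent from the outset why only $\mu^{\VV}$ survives.)

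For the second statement, put $\ov g=\sigma^{-2}g^{\HH}+\rho^{-2}g^{\VV}$. The splitting $TM=\HH\oplus\VV$ and the tensor $\varphi^{*}h$ are unchanged, so $\ov\Cc_\varphi=\sigma^{2}\Cc_\varphi$ on $\HH$, giving $\ov{\lambda}_i^{\,2}=\sigma^{2}\lambda_i^{2}$, and likewise $\gr^{\ov\HH}f=\sigma^{2}\gr^{\HH}f$ for every function $f$. It remains to record how the mean curvature of the fibres transforms; a direct Koszul-formula computation (cf.\ \cite{slub,pan}) yields $\ov\mu^{\VV}=\sigma^{2}\big(\mu^{\VV}+\gr^{\HH}\ln\rho\big)$. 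Substituting into the criterion \eqref{fh} written for $\ov g$,
\begin{equation*}
\gr^{\ov\HH}(\ln\ov{\lambda}_1\ov{\lambda}_2)-(m-2)\ov\mu^{\VV}
=\sigma^{2}\Big[\big(\gr^{\HH}(\ln\lambda_1\lambda_2)-(m-2)\mu^{\VV}\big)+\gr^{\HH}\ln\big(\sigma^{2}\rho^{2-m}\big)\Big],
\end{equation*}
so, assuming $\varphi$ is already $\sigma_2$-critical for $g$, it stays $\sigma_2$-critical for $\ov g$ if and only if $\gr^{\HH}\ln(\sigma^{2}\rho^{2-m})=0$, equivalently $\gr^{\HH}(\sigma^{2}\rho^{2-m})=0$. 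Here the only delicate point is the transformation law for $\mu^{\VV}$, where the factor $m-2$ and the absence of any $\gr^{\HH}\ln\sigma$ term must be established with care.
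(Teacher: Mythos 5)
Your argument is correct, and I could verify the two computations you flag as delicate: the horizontal part of $\delta\bigl(E_1^{\flat}\wedge E_2^{\flat}\bigr)$ is indeed $(m-2)\,\iota_{\mu^{\VV}}\omega$ with the mean-curvature normalization $\mu^{\VV}=\frac{1}{m-2}\sum_{\alpha}\bigl(\nabla_{E_\alpha}E_\alpha\bigr)^{\HH}$ used in the paper (the horizontal contractions only feed the vertical component, via $[E_1,E_2]^{\VV}$), and the Koszul computation does give $\ov\mu^{\VV}=\sigma^{2}\bigl(\mu^{\VV}+\gr^{\HH}\ln\rho\bigr)$ with no $\gr^{\HH}\ln\sigma$ term, so the factor $\sigma^{2}\rho^{2-m}$ comes out exactly as claimed. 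Your route is, however, genuinely different from the paper's: the paper gives no proof at all here, deferring to \cite{slobi}, where the criterion is derived by inserting the spectral decomposition of the Cauchy--Green tensor $\Cc_\varphi$ into the explicit formula for $\tau_{\sigma_2}(\varphi)$ --- essentially the alternative you mention parenthetically. Your symplectic formulation, $\mathcal{E}_{\sigma_2}(\varphi)=\tfrac12\int_M\abs{\varphi^{*}\omega_N}^{2}\nu_g$ with Euler--Lagrange condition $[\delta(\varphi^{*}\omega_N)]^{\HH}=0$, is the Speight--Svensson point of view \cite{sve,svee}; it buys a conceptually cleaner explanation of why only $\mu^{\VV}$ and the product $\lambda_1\lambda_2$ enter (closedness of $\varphi^{*}\omega_N$ kills everything else), and it makes the second statement an almost mechanical consequence of the transformation laws under the biconformal-type change of metric, at the price of being special to $n=2$, whereas the tension-field route in \cite{slobi} sits inside the general $\sigma_2$-formalism of the paper. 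Two small points worth making explicit if you write this up: the local choice of $\omega_N$ is harmless because both the integrand and the criticality condition are independent of the sign of $\omega_N$, and the surjectivity of $V\mapsto\eta$ onto compactly supported horizontal $1$-forms is what legitimizes reading off only the horizontal part of $\delta(\varphi^{*}\omega_N)$.
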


We can directly check that, if $\varphi$ is horizontally conformal, i.e. $\lambda_{1}^{2} = \lambda_{2}^{2}$, then \eqref{fh} is equivalent to the 4-harmonicity equation and that \eqref{fh} is equivalent to:
\begin{equation}\label{sil}
\left\{
\begin{array}{ccc}
\frac{1}{2}\lambda_{1}^{2}E_1 (\lambda_{2}^{2}) +
\frac{1}{2}\lambda_{2}^{2}E_1 (\lambda_{1}^{2}) - 
(m-2)\lambda_{1}^{2}\lambda_{2}^{2}g(\mu^{\VV}, E_1) &=& 0\\[3mm]
\frac{1}{2}\lambda_{1}^{2}E_2 (\lambda_{2}^{2}) +
\frac{1}{2}\lambda_{2}^{2}E_2 (\lambda_{1}^{2}) - 
(m-2)\lambda_{1}^{2}\lambda_{2}^{2}g(\mu^{\VV}, E_2) &=& 0
\end{array}
\right.
\end{equation}
For our map given by \eqref{aho}, the second equation in \eqref{sil} is  trivially satisfied and the first one leads to the following second order ODE in $\alpha$:
\begin{equation}\label{odesi2}
\alpha^{\prime} \sin \alpha \left\{ 
[\alpha^{\prime \prime} \sin \alpha + (\alpha^{\prime})^2 \cos \alpha] \cdot 
\left(\frac{k^2}{\cos^2 s} + \frac{\ell^2}{\sin^2 s}\right)
+ \alpha^{\prime} \sin \alpha \cdot 
\left(\frac{k^2}{\sin s \cos^3 s} - \frac{\ell^2}{\cos s \sin^3 s}\right)\right\} = 0.
\end{equation}

Contrary to the harmonic case, this equation always has a (unique) solution, for all $\ell$, $k$ which satisfies boundary conditions $\alpha(0)=0$, $\alpha(\pi /2)=\pi$:

\begin{equation}\label{alfsig}
\alpha(s) = \left\{
\begin{array}{ccc}
\arccos \left(1 - 2 \dfrac{\ln \left(\frac{k^2}{\ell^2}\sin^2 s + \cos^2 s\right)}{\ln \frac{k^2}{\ell^2}}\right)&,&\text{if} \ \abs{k} > \abs{\ell}\\[3mm] 
2s&,&\text{if}  \ \abs{k} = \abs{\ell}\\
\end{array}
\right.
\end{equation}

Therefore, with $\alpha$ given above, we have obtained a   $\sigma_2$-critical map $\varphi_{k, \ell}^{\alpha}$ in every nontrivial homotopy class of $\pi_3(\Ss^2)$. Moreover, they are local minima among equivariant maps of the same type. 

\begin{pr}
The equation \eqref{odesi2} is the Euler-Lagrange equation for the reduced $\sigma_2$-energy functional:
\begin{equation}\label{reden}
\varepsilon_{\sigma_{2}}(\alpha) =
\frac{2\pi^2}{R} \int_{0}^{\frac{\pi}{2}} 
\left(k^2 \tan s + \ell^2 \cot s \right)(\alpha^{\prime})^2 \sin^2 \alpha \, \dif s.
\end{equation}
The solutions \eqref{alfsig} are stable critical points for the energy functional $\varepsilon_{\sigma_{2}}$.
\end{pr}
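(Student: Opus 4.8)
\emph{Plan.} The reduced-energy claim is a reduction identity followed by a one-line Euler--Lagrange computation; stability is obtained by \emph{convexifying} \eqref{reden} via the substitution $u=-\cos\alpha$.

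\emph{Reduction.} For a map into a surface one has $\abs{\wedge^2\dif\varphi}^2=\lambda_1^2\lambda_2^2$, the product of the two nonzero eigenvalues of $\Cc_\varphi$; substituting the values of $\lambda_1^2$, $\lambda_2^2$ recorded just before the statement, together with $\nu_g=R^3\sin s\cos s\,\dif x_1\,\dif x_2\,\dif s$, and integrating out $x_1,x_2\in(0,2\pi)$, I get $\mathcal{E}_{\sigma_2}(\varphi^\alpha_{k,\ell})=\varepsilon_{\sigma_2}(\alpha)$ after the simplification $\tfrac{k^2\sin^2 s+\ell^2\cos^2 s}{\sin s\cos s}=k^2\tan s+\ell^2\cot s$. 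Writing $W(s)=k^2\tan s+\ell^2\cot s$, the Euler--Lagrange equation of \eqref{reden} is $\tfrac{\dif}{\dif s}\bigl(W\alpha'\sin^2\alpha\bigr)=W(\alpha')^2\sin\alpha\cos\alpha$, that is $\sin\alpha\bigl[W\bigl(\alpha''\sin\alpha+(\alpha')^2\cos\alpha\bigr)+W'\alpha'\sin\alpha\bigr]=0$; since $W=\bigl(\tfrac{k^2}{\cos^2 s}+\tfrac{\ell^2}{\sin^2 s}\bigr)\sin s\cos s$ and $\tfrac{W'}{\sin s\cos s}=\tfrac{k^2}{\sin s\cos^3 s}-\tfrac{\ell^2}{\sin^3 s\cos s}$, multiplying by the (generically nonzero) factor $\alpha'/(\sin s\cos s)$ turns this into exactly \eqref{odesi2}. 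This is to be expected, because \eqref{odesi2} was itself produced from \eqref{sil}, which is \eqref{fh} multiplied through by $\lambda_1^2\lambda_2^2$.

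\emph{Stability.} I pass to the new unknown $u(s)=-\cos\alpha(s)$. Since $(u')^2=(\alpha')^2\sin^2\alpha$, the functional \eqref{reden} becomes the strictly convex quadratic functional $u\longmapsto\frac{2\pi^2}{R}\int_0^{\pi/2}W(s)(u')^2\,\dif s$, while the constraints $\alpha(0)=0$, $\alpha(\pi/2)=\pi$ become $u(0)=-1$, $u(\pi/2)=1$ \emph{for every admissible $\alpha$}. A strictly convex functional on an affine space of functions with prescribed endpoint values has a unique critical point, which is its global minimum; here it satisfies $u^{*\prime}=C/W$, hence $u^*(s)=-1+2\bigl(\int_0^{\pi/2}\tfrac{\dif r}{W(r)}\bigr)^{-1}\int_0^{s}\tfrac{\dif r}{W(r)}$, the integral converging because $W\ge 2\abs{k\ell}$. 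Consequently $\varepsilon_{\sigma_2}(\alpha)\ge\varepsilon_{\sigma_2}\bigl(\arccos(-u^*)\bigr)$ for every admissible $\alpha$, with equality only when $\alpha=\arccos(-u^*)$. It then remains to check that $\arccos(-u^*)$ is precisely the map \eqref{alfsig}: when $\abs k>\abs\ell$, using $\tfrac1W=\tfrac{\sin s\cos s}{k^2\sin^2 s+\ell^2\cos^2 s}$ and $\tfrac{k^2}{\ell^2}\sin^2 s+\cos^2 s=(k^2\sin^2 s+\ell^2\cos^2 s)/\ell^2$, one finds that $-\cos$ of the right-hand side of \eqref{alfsig} has derivative $\tfrac{4(k^2-\ell^2)}{\ln(k^2/\ell^2)}\,\tfrac1W$ and endpoint values $-1$ and $1$; when $\abs k=\abs\ell$, $W=k^2/(\sin s\cos s)$ gives $u^*(s)=-\cos 2s$, i.e.\ $\alpha(s)=2s$. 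Hence \eqref{alfsig} is the unique (strict) global minimizer of $\varepsilon_{\sigma_2}$, in particular a stable critical point; combined with the reduction step this also re-proves that the associated $\varphi^\alpha_{k,\ell}$ is $\sigma_2$-critical and minimizing inside the equivariant family.

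\emph{Main obstacle.} Beyond trigonometric bookkeeping, the one substantive point is to identify the explicit $\arccos$-formula of \eqref{alfsig} with the normalized primitive of $1/W$ --- the crux of matching the critical point to the convex minimizer --- and to verify that $\arccos(-u^*)$ is smooth up to $s=0,\pi/2$, where $u^*$ attains $\mp1$ with vanishing slope; this is the standard regularity issue for the toroidal ansatz, handled as in \cite{ratto}. I expect the first of these to be the main (though still brief) difficulty.
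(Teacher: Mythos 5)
Your proposal is correct, and the reduction and Euler--Lagrange portions coincide with the paper's (very terse) argument: the paper likewise opens with the identity $\mathcal{E}_{\sigma_2}(\varphi^{\alpha}_{k,\ell})=\varepsilon_{\sigma_2}(\alpha)$ and disposes of the Euler--Lagrange equation by ``a direct computation using integration by parts,'' which is exactly your computation $\frac{\dif}{\dif s}(W\alpha'\sin^2\alpha)=W(\alpha')^2\sin\alpha\cos\alpha$ followed by multiplication by $\alpha'/(\sin s\cos s)$. Where you genuinely diverge is the stability step. The paper proves stability by writing down the second variation of $\varepsilon_{\sigma_2}$ along a fixed-endpoint variation and observing that the displayed integrand is nonnegative; you instead substitute $u=-\cos\alpha$, which turns \eqref{reden} into the strictly convex quadratic functional $\frac{2\pi^2}{R}\int_0^{\pi/2}W(u')^2\,\dif s$ on an affine space with pinned endpoints $u(0)=-1$, $u(\pi/2)=1$. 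Your route buys strictly more: it yields uniqueness of the critical point and \emph{global} minimality of \eqref{alfsig} within the equivariant class (not merely nonnegativity of the second variation), and it explains where the $\arccos$-formula \eqref{alfsig} comes from, namely as the normalized primitive of $1/W$ --- a verification you carry out correctly in both cases $\abs{k}>\abs{\ell}$ and $\abs{k}=\abs{\ell}$. It also sidesteps the bookkeeping of the cross term $vv'$ in the direct second variation; indeed, the clean form of the second variation at the critical point is $\frac{4\pi^2}{R}\int_0^{\pi/2}W\,[(v\sin\alpha)']^2\,\dif s$, obtained precisely from your substitution, and this is not literally the expression displayed in the paper's proof --- so your argument is not only different but arguably more robust on this point. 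The only caveat is the regularity of $\arccos(-u^*)$ at the endpoints $s=0,\pi/2$, which you correctly flag and which the paper does not address either; as you say, it is the standard endpoint analysis for the toroidal ansatz.
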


\begin{proof}
Note that $\mathcal{E}_{\sigma_{2}}(\varphi_{k, \ell}^{\alpha})=\varepsilon_{\sigma_{2}}(\alpha)$. Consider a fixed endpoints variation $\{\alpha_t\}$ of $\alpha$. To prove the result, we only have to follow a direct computation using integration by parts. For the second variation we get:
\begin{equation*}
\dfrac{\dif^2}{\dif t^2} \Bigl\lvert _{0}\varepsilon_{\sigma_{2}}(\alpha_t) =
\frac{4\pi^2}{R} \int_{0}^{\frac{\pi}{2}} 
\left(\dfrac{\dif \alpha_t}{\dif t} \Bigl\lvert _{0}\right)^2 
(\alpha^{\prime})^2 \left(k^2 \tan s + \ell^2 \cot s \right) \, \dif s \geq 0.
\end{equation*}
\end{proof}

\begin{remark}
\begin{enumerate}
\item[(a)]
The $\sigma_2$-critical maps $\varphi_{k, \ell}^{\alpha}$ with $\alpha$ given by \eqref{alfsig} are also harmonic (so critical points for the \textit{full} energy) \textit{only} if $\abs{k} = \abs{\ell}=1$, that corresponds to the (conjugate) Hopf fibration. 

\noindent Recall that in \cite{sve, svee} it has been proved that Hopf map is a \textit{stable} critical point for $\mathcal{E}_{\sigma_{1,2}}$ (if $K \geq 1$) and an absolute minimizer for $\mathcal{E}_{\sigma_{2}}$ (the equivalent quartic energy term used in \cite{sve, svee} is $\int_M \norm{\varphi^*\Omega}^2 \nu_g$).

\noindent For further discussions about critical configurations for the Faddeev-Hopf model on $\Ss^3$ inside the same ansatz, see also \cite{AGW}.

\item[(b)] The $\sigma_2$-energy of critical maps obtained from \eqref{alfsig} is:
\begin{equation}\label{energy}
\mathcal{E}_{\sigma_2}(\varphi_{k, \ell}^{\alpha})=
\frac{16\pi^2}{R} \cdot \frac{k^2-\ell^2}{\ln \left(k^2 / \ell^2 \right)} \ \ (\abs{k} > \abs{\ell}), \quad \mathcal{E}_{\sigma_2}(\varphi_{k, k}^{\alpha})=
\frac{16\pi^2 k^2}{R} \ \ (\abs{k} = \abs{\ell})
\end{equation}
where $Q=k\ell$ is the Hopf charge of the solution (compare with \cite[(29)]{fer}). In particular, the $\sigma_2$-energy of the Hopf map is $\frac{16 \pi^2}{R}$. 

\item[(c)] The $\sigma_2$-critical maps given by \eqref{alfsig} becomes $\sigma_{1,2}$-critical with respect to an appropriately perturbed domain metric $\ov g = \sigma^{-2}g^{\mathcal{H}} + \sigma^{-4}g^{\mathcal{V}}$. Indeed, being $\sigma_2$-critical is invariant under these changes of metric, cf. Lemma \ref{le}, while the tension field becomes $\ov \tau(\varphi_{k, \ell}^{\alpha})=\sigma^2\left[\tau(\varphi_{k, \ell}^{\alpha})+ \dif \varphi_{k, \ell}^{\alpha} (\gr \ln \sigma^{-2})\right]$. But, at least locally, it is possible to find $\sigma$ such that $\ov \tau(\varphi_{k, \ell}^{\alpha})=0$, i.e. $\varphi_{k, \ell}^{\alpha}$ is also harmonic.
\end{enumerate}
\end{remark}

Recall the topological lower bound found in \cite{svee}: $\mathcal{E}_{\sigma_2}(\varphi) \geq 16\pi^2 Q(\varphi)$. In the family of solutions $\varphi_{k, \ell}^{\alpha}$ this bound is attained if and only if $k=\ell$. Therefore:
\begin{pr}
The solution $\varphi_{k, k}^{\alpha=2s}$ is an absolute $\sigma_2$-minima in its homotopy class.
\end{pr}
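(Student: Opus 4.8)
The plan is to play the explicit energy value recorded in \eqref{energy} against the topological lower bound of \cite{svee}, so that the whole matter reduces to checking that the former saturates the latter. First I would pin down the homotopy class: $\varphi_{k,k}^{\alpha}$ with $\alpha(s)=2s$ is the $\abs{k}=\abs{\ell}$ branch of \eqref{alfsig} (which indeed meets the boundary conditions $\alpha(0)=0$, $\alpha(\pi/2)=\pi$), and since $Q(\varphi_{k,\ell}^{\alpha})=k\ell$ while homotopy classes of maps $\Ss^3\to\Ss^2$ are classified by the Hopf invariant ($\pi_3(\Ss^2)\cong\ZZ$, as recalled in the Introduction), $\varphi_{k,k}^{\alpha}$ represents the class of Hopf charge $Q=k^2$. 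Consequently every competitor $\psi$ homotopic to it also has $Q(\psi)=k^2$.

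Next I would invoke the bound $\mathcal{E}_{\sigma_2}(\psi)\geq 16\pi^2 Q(\psi)$ of \cite{svee}; on $\Ss^3_R$ it reads $\mathcal{E}_{\sigma_2}(\psi)\geq 16\pi^2 Q(\psi)/R$, the factor $1/R$ being just the rescaling of the quartic energy when $\Ss^3$ is replaced by $\Ss^3_R$ (equivalently, one may simply work on the unit sphere). Thus $\mathcal{E}_{\sigma_2}(\psi)\geq 16\pi^2 k^2/R$ for every $\psi$ in the homotopy class of $\varphi_{k,k}^{\alpha}$.

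Finally, a one-line computation closes the argument: substituting $\alpha'=2$, $\sin^2\alpha=\sin^2 2s=4\sin^2 s\cos^2 s$, $\ell=k$, and $\tan s+\cot s=1/(\sin s\cos s)$ into the reduced energy \eqref{reden}, and using $\int_0^{\pi/2}\sin s\cos s\,\dif s=\tfrac12$, one gets $\mathcal{E}_{\sigma_2}(\varphi_{k,k}^{\alpha})=\varepsilon_{\sigma_2}(2s)=16\pi^2 k^2/R$, exactly the value in \eqref{energy}. Hence the energy of $\varphi_{k,k}^{\alpha}$ equals a lower bound valid for its entire homotopy class, which is precisely the claimed absolute $\sigma_2$-minimality.

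There is no genuine obstacle here: the proof is of the shape ``universal lower bound meets matching example''. The only points deserving a moment of care are bookkeeping the factor $1/R$ (dispatched by the scale behaviour of $\mathcal{E}_{\sigma_2}$, or by taking $R=1$), and checking that the quartic functional used in \cite{svee} (written there as $\int_M\norm{\varphi^*\Omega}^2\nu_g$) coincides with $\mathcal{E}_{\sigma_2}$ under the normalization already adopted in \eqref{energy}, so that the constant $16\pi^2$ matches on the nose.
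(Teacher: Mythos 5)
Your proposal is correct and follows exactly the paper's own argument, which is contained in the sentence immediately preceding the Proposition: compare the explicit energy \eqref{energy} with the Speight--Svensson topological lower bound $\mathcal{E}_{\sigma_2}(\varphi)\geq 16\pi^2 Q(\varphi)$ and observe that it is saturated precisely when $k=\ell$. Your version merely spells out the details (the homotopy class $Q=k^2$, the $1/R$ scaling, and the one-line evaluation of $\varepsilon_{\sigma_2}(2s)$) that the paper leaves implicit.
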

Notice that, denoting by $\Omega$ the area-form on $\Ss^2$ and by $\eta$ the standard contact form on $\Ss^3$ (see \cite{slobi} for details), we have $(\varphi^{\alpha}_{k, k})^*\Omega=k \, \dif \eta$, i.e. $\varphi^{\alpha}_{k, k}$ is \textit{transversally area-preserving} up to rescale, as expected for heuristic reasons presented in the introduction.

\label{lastpage}
\end{document}